\newtheorem{theorem}{Theorem}
\newtheorem{prop}{Proposition}
\newtheorem{lemma}{Lemma}
\newtheorem{rem}{Remark}
\newtheorem{exmp}{Example}
\newtheorem{cor}{Corollary}
\begin{document}
\author{Mark Pankov}
\title{On distance two in Cayley graphs of Coxeter groups}
\subjclass[2010]{20F55, 05C12}
\keywords{Coxeter system, Cayley graph}
\address{Department of Mathematics and Computer Science, University of Warmia and Mazury,
S{\l}oneczna 54, Olsztyn, Poland}
\email{pankov@matman.uwm.edu.pl, markpankovmath@gmail.com}

\maketitle

\begin{abstract}
We consider the Cayley graph ${\rm C}(W,S)$ of a Coxeter system $(W,S)$
and describe all maximal $2$-cliques in this graph, 
i.e. maximal subsets in the vertex set such that  the distance between any two distinct elements is equal to $2$.
As an application, we show that every automorphism of the half of Cayley graph is uniquely extendable 
to an automorphism of the Cayley graph if $|S|\ge 5$.
\end{abstract}

\section{Introduction}
The distance between two vertices in a connected graph is defined as
the smallest number $m$ such that the vertices are connected by a path consisting of $m$ edges.
Two vertices are said to be $2$-{\it adjacent} if the distance between them is equal to $2$.
A clique in a graph is a subset of the vertex set, where any two distinct vertices are adjacent
(connected by an edge).
We say that a subset in the vertex set is a $2$-{\it clique} if any two distinct elements of
this subset are $2$-adjacent vertices.

The main object of this paper is the Cayley graph ${\rm C}(W,S)$ of a Coxeter system $(W,S)$.

If $S$ consists of $n$ mutually commuting involutions then 
the Cayley graph is the $n$-dimensional hypercube graph.
It is well-known that this graph contains precisely two types of maximal $2$-cliques
(maximal cliques of the corresponding half-cube graph).
Every maximal $2$-clique of the first type consists of the $n$ vertices adjacent to a given vertex
and maximal $2$-cliques of the second type are formed by four elements.

We show that the Cayley graph of any Coxeter system contains 
at most three types of maximal $2$-cliques (Theorem \ref{theorem1}).
The first type is $Sw$, $w\in W$ (our Cayley graph is left).
Maximal $2$-cliques of the second type correspond to triples of mutually non-adjacent nodes in 
the associated  Coxeter diagram and contain precisely four elements.
The third type is related to unlabeled edges of the Coxeter diagram, 
i.e. pairs $s,s'\in S$ such that the order of $ss'$ is $3$.
Every maximal $2$-clique of this type consists of three elements.

In Section 5, we consider the half of Cayley graph (a generalization of the half-cube graph).
Using Theorem \ref{theorem1}, we show that every automorphism of this graph  
can be uniquely extended to an automorphism of the Cayley graph if $|S|\ge 5$.

\section{Coxeter systems and associated Cayley graphs}
Let $W$ be a group generated by a finite set $S$.
Suppose that every element of $S$ is an involution.
For distinct $s,s'\in S$ we denote by $m(s,s')$ the order of the element $ss'$.
Then $m(s,s')=m(s',s)$ and
the condition $m(s,s')=2$ is equivalent to the commuting of $s$ and $s'$.

From this moment we suppose that $(W,S)$ is a {\it Coxeter system}. 
This means that 
the group $W$ has the following presentation
$$W=\langle\,S: (ss')^{m(s,s')}=1, \;(s,s')\in {\mathfrak I}\,\rangle,$$
where ${\mathfrak I}$ is the set of all pairs $(s,s')$ such that $m(s,s')$ is finite.
Our Coxeter system is completely defined by the associated 
{\it Coxeter diagram} whose nodes are the elements of $S$.
The nodes corresponding  to $s$ and $s'$ are connected by an edge only in the case when $m(s,s')\ge 3$
(the involutions $s$ and $s'$ are non-commuting). 
If $m(s,s')\ge 4$ then the edge connecting $s$ and $s'$
is labeled by the number $m(s,s')$.
All spherical and affine Coxeter systems are known,
see \cite[Appendix 1]{BB} or \cite[Table 6.1]{Davis} for the corresponding irreducible diagrams.

The {\it Cayley graph} ${\rm C}(W,S)$ is the graph whose vertex set is $W$
and $w,v\in W$ are adjacent vertices if $v=sw$ for a certain $s\in S$
(since $S$ consists of involutions, the adjacency relation is symmetric).
In contrast to \cite[Section 2.1]{Davis}, our Cayley graph is left.
In the right Cayley graph, $w,v\in W$ are adjacent if $v=ws$ for a certain $s\in S$.
The mapping $w\to w^{-1}$ is an isomorphism between these graphs.

The Cayley graph of the dihedral group $\textsf{I}_{2}(n)$ is the $(2n)$-cycle.
See \cite[Figures 3.2 and 3.3]{BB} for the Cayley graphs of $\textsf{A}_{3}$ and $\textsf{H}_{3}$.

For every $w\in W$ the right multiplication 
$$R_{w}: v\to vw$$
is an automorphism of the Cayley graph.
If the diagram of our Coxeter system does not contain adjacent edges labeled by $\infty$
then the automorphism group of the Cayley graph is the semidirect product of $W$
and the automorphism group of the diagram \cite[Corollary 3.2.6]{BB}.

\begin{rem}{\rm
The Cayley graph can be identified with the graph whose vertices 
are maximal simplices of the Coxeter complex $\Sigma(W,S)$ and 
two maximal simplices are adjacent vertices of the graph if their intersection consists of $|S|-1$ elements
\cite{Tits}.
}\end{rem}

The {\it length} $l(w)$ of an element $w\in W$
is the smallest number $m$ such that $w$ has an expression 
\begin{equation}\label{eq-express}
w=s_{1}\dots s_{m},\;\;s_{1},\dots,s_{m}\in S.
\end{equation}
This is the distance between $1$ and $w$ in the Cayley graph.
The distance $d(w,v)$ between $w,v\in W$ is equal to 
$$l(wv^{-1})=l(vw^{-1}).$$
In what follows we will say that  \eqref{eq-express} is a {\it reduced}  expression if $m=l(w)$.

If $u$ and $u'$ are adjacent to $w$ then 
$u=sw$ and $u'=s'w$ for some $s,s'\in S$. Since $d(s,s')=2$,
the elements $u$ and $u'$ are not adjacent. 
Therefore, every maximal clique of the Cayley graph is a pair of adjacent vertices.

We will use the following well-known properties of Coxeter systems, 
see \cite{BB,Davis}.

\begin{theorem}[The exchange condition]\label{th-exch}
For every reduced expression \eqref{eq-express}
and every $s\in S$ satisfying $l(sw)\le m$ there exists 
$k\in \{1,\dots,m\}$ such that 
$$sw=s_{1}\dots \hat{s}_{k}\dots s_{m}$$
{\rm(the symbol $\hat{}$ means that the corresponding term is omitted)}.
\end{theorem}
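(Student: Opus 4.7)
The plan is to use the canonical geometric representation of $(W,S)$ and its associated root system; this is the standard route taken in \cite{BB,Davis} and gives the cleanest derivation.

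First I would set up the reflection representation. Let $V=\mathbb{R}^{S}$ with basis $\{\alpha_{s}\}_{s\in S}$, equipped with the symmetric bilinear form $B(\alpha_{s},\alpha_{s'})=-\cos(\pi/m(s,s'))$ (with the convention $\cos(\pi/\infty)=1$). For each $s\in S$ put $\sigma_{s}(v)=v-2B(\alpha_{s},v)\alpha_{s}$. A direct check shows the $\sigma_{s}$ satisfy the defining Coxeter relations, so they extend to a faithful representation $\rho\colon W\to GL(V)$. Define the root system $\Phi=\{w(\alpha_{s}):w\in W,\,s\in S\}$; one shows each $\alpha\in\Phi$ is a $\pm$-nonnegative combination of the $\alpha_{s}$, giving the partition $\Phi=\Phi^{+}\sqcup\Phi^{-}$.

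The heart of the argument is the length--root dictionary
$$l(sw)<l(w)\iff w^{-1}(\alpha_{s})\in\Phi^{-},$$
together with the stronger statement that, for any reduced expression $w=s_{1}\cdots s_{m}$, the vectors
$$\gamma_{k}:=s_{1}s_{2}\cdots s_{k-1}(\alpha_{s_{k}}),\qquad k=1,\dots,m,$$
are $m$ distinct positive roots comprising precisely the ``inversion set'' $\{\alpha\in\Phi^{+}:w^{-1}(\alpha)\in\Phi^{-}\}$. Both facts are proved together by induction on $m=l(w)$, tracking how left multiplication by a simple reflection toggles exactly one positive root in and out of the inversion set.

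Granting this, the conclusion is a short computation. The hypothesis $l(sw)\le m=l(w)$ forces $l(sw)=m-1<l(w)$, so by the dictionary $w^{-1}(\alpha_{s})\in\Phi^{-}$, whence $\alpha_{s}=\gamma_{k}$ for some $k\in\{1,\dots,m\}$. Reading this as an identity of reflections gives $s=s_{1}\cdots s_{k-1}s_{k}s_{k-1}\cdots s_{1}$; substituting into $sw$ and collapsing the telescoping product $s_{k-1}\cdots s_{1}\cdot s_{1}\cdots s_{k-1}=1$ leaves
$$sw=s_{1}\cdots s_{k-1}s_{k}\cdot s_{k}s_{k+1}\cdots s_{m}=s_{1}\cdots\hat{s}_{k}\cdots s_{m},$$
as required. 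The main obstacle is the simultaneous induction establishing the length--root dictionary and the description of the inversion set by the $\gamma_{k}$: the key technical points are that left multiplication by a simple reflection $s$ flips the sign of exactly the root $\alpha_{s}$, and that no two of the $\gamma_{k}$ coincide for a reduced word (failure of either would contradict minimality of $m$). Once this is secured, the deduction of the exchange condition is purely formal.
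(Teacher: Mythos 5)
This is one of the results the paper explicitly imports as ``well-known'' with a pointer to \cite{BB,Davis}; there is no proof of it in the paper, so there is nothing internal to compare your argument against. What you have written is the standard geometric-representation proof from exactly those references (the reflection representation on $V=\mathbb{R}^{S}$, the partition $\Phi=\Phi^{+}\sqcup\Phi^{-}$, the length--root dictionary, and the description of the inversion set of $w^{-1}$ by the roots $\gamma_{k}=s_{1}\cdots s_{k-1}(\alpha_{s_{k}})$), and the outline is correct: the dictionary plus the inversion-set description do yield $\alpha_{s}=\gamma_{k}$, and the telescoping computation then gives the deletion of $s_{k}$. Two points deserve more care than your sketch gives them. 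First, faithfulness of $\rho$ does \emph{not} follow from the ``direct check'' that the $\sigma_{s}$ satisfy the Coxeter relations --- that check only produces a representation; faithfulness is itself a consequence of the positive-root theorem and must be established as part of the induction you describe. Second, faithfulness is silently but essentially used in the step where you read $\alpha_{s}=s_{1}\cdots s_{k-1}(\alpha_{s_{k}})$ as the group identity $s=s_{1}\cdots s_{k-1}s_{k}s_{k-1}\cdots s_{1}$: a priori one only gets equality of the images under $\rho$ (both are the $B$-reflection fixing the hyperplane of $\alpha_{s}$), and injectivity of $\rho$ is what lets you lift this to $W$. Finally, the inference $l(sw)\le m\Rightarrow l(sw)=m-1$ rests on the sign character $w\mapsto(-1)^{l(w)}$, which is worth a word. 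None of these is a gap in substance --- they are all part of the standard development --- but they are the places where a ``purely formal'' deduction actually leans on the machinery.
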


\begin{prop}\label{prop-b1}
For every $w\in W$ there is a subset $S_{w}\subset S$
such that every reduced expression of $w$  is formed by all elements of $S_{w}$.
\end{prop}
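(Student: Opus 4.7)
The plan is to fix any reduced expression $w = s_1 \cdots s_m$ and set $S_w := \{s_1, \ldots, s_m\}$, then prove that this set does not depend on the choice of reduced expression. Equivalently, I must show that any two reduced expressions of $w$ use the same underlying set of elements of $S$.

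My preferred route is via Matsumoto's (Tits') theorem, a well-known consequence of the exchange condition (see \cite{BB,Davis}): any two reduced expressions of $w$ are connected by a finite sequence of \emph{braid moves} $\underbrace{ss's\cdots}_{m(s,s')} \leftrightarrow \underbrace{s'ss'\cdots}_{m(s,s')}$ for pairs $s, s' \in S$ with $m(s,s') < \infty$. Each such move rearranges letters only within the pair $\{s, s'\}$, so the set of generators appearing in the word is invariant under braid moves. Since all reduced expressions of $w$ are braid-equivalent, they share the same underlying set, and the proposition follows.

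If instead one wishes to argue directly from the exchange condition (Theorem \ref{th-exch}), the natural strategy is induction on $m = l(w)$ applied to two reduced expressions $w = s_1 \cdots s_m = s'_1 \cdots s'_m$. The case $s_1 = s'_1$ reduces immediately by cancellation. When $s_1 \neq s'_1$, one has $l(s'_1 w) = m - 1 < m$, and the exchange condition produces an index $k$ with $w = s'_1 s_1 \cdots \hat{s}_k \cdots s_m$; comparing this new reduced expression with $w = s'_1 s'_2 \cdots s'_m$ (both starting with $s'_1$) returns to the previous case and yields the intermediate set equation $\{s_1, \ldots, \hat{s}_k, \ldots, s_m\} = \{s'_2, \ldots, s'_m\}$. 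The main obstacle is then closing the gap between this equation and the required $\{s_1, \ldots, s_m\} = \{s'_1, \ldots, s'_m\}$: one must account simultaneously for the missing letter $s_k$ on one side and the extra letter $s'_1$ on the other. This calls for a symmetric exchange with $s_1$ acting on $s'_1 \cdots s'_m$ and a case analysis on whether $s_k$ and $s'_1$ already occur elsewhere in the respective sequences. It is precisely this bookkeeping that makes the Matsumoto route more economical, which is why I would present that as the main line of attack.
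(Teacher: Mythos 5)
Your main line of argument is correct and complete. Note first that the paper offers no proof to compare against: Proposition \ref{prop-b1} is listed among the ``well-known properties'' of Coxeter systems and simply cited to \cite{BB,Davis}. Your derivation from Matsumoto's (Tits') word theorem is the standard one and is sound: a braid move replaces an alternating subword $ss's\cdots$ of length $m(s,s')$ by $s'ss'\cdots$ of the same length, and since $m(s,s')\ge 2$ for distinct generators, \emph{both} sides of the relation contain both letters $s$ and $s'$, so the underlying set of generators of the word is unchanged; braid-equivalence of all reduced expressions then gives a well-defined $S_w$. The only caveat is that Matsumoto's theorem is itself a nontrivial consequence of the exchange condition, so you are resting on a result at least as deep as the one being proved --- but that is exactly the level at which the paper treats this proposition, so it is an acceptable foundation here. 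Your sketch of the direct inductive route via the exchange condition is also on the right track; the bookkeeping you identify (accounting for the deleted letter $s_k$ and the prepended letter $s'_1$) does close up, because in the degenerate subcases where the symmetric exchanges both delete the first letter one derives $s_1=s'_1$ by cancellation, contradicting the case hypothesis. Either route establishes the proposition.
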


\begin{prop}\label{prop-b2}
The group $W$ cannot be spanned by a proper subset of $S$. 
\end{prop}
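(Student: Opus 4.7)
The plan is to argue by contradiction. Suppose $W=\langle S'\rangle$ for some proper subset $S'\subsetneq S$, and fix $s\in S\setminus S'$. The goal is to produce a reduced expression of $s$ that avoids the letter $s$, contradicting Proposition~\ref{prop-b1}.

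I would first pin down the subset $S_{s}$ associated with $s$ via Proposition~\ref{prop-b1}. Since $s$ is a non-trivial involution, $l(s)=1$ and the only reduced expression of $s$ is the one-letter word $s$; hence $S_{s}=\{s\}$, meaning every reduced expression of $s$ must involve the generator $s$ itself.

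The crux of the argument is a subword property extracted from the exchange condition: given any expression $w=t_{1}\cdots t_{n}$ with $t_{i}\in S$, there are indices $i_{1}<\cdots<i_{m}$ in $\{1,\dots,n\}$ for which $w=t_{i_{1}}\cdots t_{i_{m}}$ is a reduced expression. I would prove this by induction on $n$. Write $w=t_{1}v$ with $v=t_{2}\cdots t_{n}$, and by induction pick a reduced subword expression $v=t_{j_{1}}\cdots t_{j_{k}}$. If $l(w)=k+1$ then $t_{1}t_{j_{1}}\cdots t_{j_{k}}$ is already reduced and the induction closes. Otherwise $l(w)\le k$, and Theorem~\ref{th-exch} applied to the reduced expression of $v$ (with the prepended letter $t_{1}$) yields an index $j_{i}$ whose omission gives $w=t_{j_{1}}\cdots\widehat{t_{j_{i}}}\cdots t_{j_{k}}$; since adjacent lengths in the Cayley graph differ by $1$ we have $l(w)\in\{k-1,k+1\}$, so this expression has length $l(w)=k-1$ and is therefore reduced.

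To finish, I would write $s=s'_{1}\cdots s'_{n}$ with $s'_{i}\in S'$ using $W=\langle S'\rangle$, apply the subword property to obtain a reduced expression of $s$ whose letters all lie in $\{s'_{1},\dots,s'_{n}\}\subset S'$, and invoke Proposition~\ref{prop-b1} to conclude $s\in S_{s}\subset S'$, contradicting the choice $s\in S\setminus S'$. The main obstacle is the inductive argument establishing the subword property; once that is in hand, the rest is a direct application of Proposition~\ref{prop-b1}.
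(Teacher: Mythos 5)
Your proof is correct; note, however, that the paper offers no proof of Proposition \ref{prop-b2} to compare against --- it is listed, together with the exchange condition and Proposition \ref{prop-b1}, among the ``well-known properties of Coxeter systems'' imported from \cite{BB,Davis}. Your derivation is the standard one and is sound. The subword (deletion) property you extract from Theorem \ref{th-exch} by induction works as stated: in the case $l(w)\le k$ the exchange condition deletes one letter from the reduced subword expression of $v$, and the resulting $(k-1)$-letter word is reduced because $l(w)\ge l(v)-1=k-1$ (the triangle inequality already suffices here; your parity argument via the Cayley graph works equally well). Applying this to an expression of $s$ in the letters of $S'$ yields a reduced expression of $s$ over $S'$; since $s$ is a nontrivial involution, $l(s)=1$, so that reduced expression is a single letter $t\in S'$ equal to $s$ in $W$, and since $S$ is by the paper's convention a subset of $W$, distinct generators are distinct group elements and hence $t=s$, giving $s\in S'$ --- the appeal to Proposition \ref{prop-b1} at this final step is harmless but not needed. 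The only cosmetic omission is the base case of the induction ($n\le 1$), which is trivial.
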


\section{Maximal $2$-cliques}
First we present three examples of $2$-cliques in the Cayley graph ${\rm C}(W,S)$.

\begin{exmp}[First type]\label{exmp1}{\rm
 Any two elements of $S$ are $2$-adjacent and $S$ is a $2$-clique.
Since the right multiplication $R_{w}$ is an automorphism of the Cayley graph,
$Sw$ is a $2$-clique for every $w\in W$. 
}\end{exmp}

\begin{rem}\label{rem1}{\rm
Suppose that $S=Sw$. Then for any $s_{1},s_{2}\in S$ there exist $s'_{1},s'_{2}\in S$
such that  $s_{1}=s'_{1}w$ and $s_{2}=s'_{2}w$.
If $w\ne 1$ then $s_{1}\ne s'_{1}$ and $s_{2}\ne s'_{2}$.
We have 
$$s'_{1}s_{1}=w=s'_{2}s_{2}\;\mbox{ and }\;s'_{2}s'_{1}s_{1}=s_{2}$$
By Proposition \ref{prop-b2}, the latter means that $s_{2}=s'_{1}$.
Therefore, $S=\{s_{1},s_{2}\}$ and $s_{1}s_{2}=s_{2}s_{1}$.
So, the equality $Sw=Sw'$ implies that $w=w'$ except the case when 
our Coxeter system is $\textsf{I}_{2}(2)$.
}\end{rem}

\begin{exmp}[Second type]\label{exmp3}{\rm
Let $s,s',s''$ be three mutually commuting elements of $S$.
Then $ss's''$ is $2$-adjacent to $s,s',s''$ and 
$$\{sw,s'w,s''w, ss's''w\}$$
is a $2$-clique for every $w\in W$.
}\end{exmp}

\begin{exmp}[Third type]\label{exmp2}{\rm
Suppose that $s,s'\in S$ and $m(s,s')=3$.
Then 
$$ss's=s'ss'.$$
We denote the latter element by $w(s,s')$.
It is $2$-adjacent to $s,s'$ and for every $w\in W$ the set 
$$\{sw,s'w, w(s,s')w\}$$
is a $2$-clique.
}\end{exmp}

Our main result is the following.

\begin{theorem}\label{theorem1}
Every maximal $2$-clique  of the Cayley graph ${\rm C}(W,S)$ 
is one of the $2$-cliques described above.
\end{theorem}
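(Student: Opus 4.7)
The plan is to translate so that $1\in X$: since $R_{w^{-1}}$ is an automorphism of ${\rm C}(W,S)$, replacing $X$ by $Xw^{-1}$ for some $w\in X$ preserves the $2$-clique property and puts $1$ in it. Then $X' := X\setminus\{1\}$ consists of elements of length $2$; each $v\in X'$ has a unique two-element support $S_v=\{s,t\}$, and $v$ equals $st$ or $ts$ (the two coincide exactly when $m(s,t)=2$).

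The technical core is two lemmas on distinct $v_1,v_2\in X'$. \emph{Lemma A:} $S_{v_1}\cap S_{v_2}\neq\emptyset$. Otherwise $v_1v_2^{-1}=s_1t_1t_2s_2$ is a word of four distinct letters; any word obtained from it by commutations has the same multiset of letters, so no length-three substring $aba$ ever appears, and by Tits' theorem the word is reduced of length~$4$, contradicting $d(v_1,v_2)=2$. \emph{Lemma B:} if $S_{v_1}\neq S_{v_2}$ and $\{t\}=S_{v_1}\cap S_{v_2}$, writing $S_{v_i}=\{s_i,t\}$, then $d(v_1,v_2)=2$ implies $v_i=s_it$ for $i=1,2$. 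One examines the four combinations $v_i\in\{s_it,ts_i\}$: only $(s_1t,s_2t)$ gives $v_1v_2^{-1}=s_1s_2$ of length~$2$; in every other case the two copies of $t$ in $v_1v_2^{-1}$ are separated by a generator $s_j$ with $m(s_j,t)\ge 3$, the element lies in the rank-three parabolic $\langle s_1,s_2,t\rangle$, and one checks by direct computation in that parabolic (or equivalently by verifying that no sequence of braid and commutation moves produces a cancellable pair) that the length is~$4$.

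With the lemmas the theorem follows. If every $v\in X'$ has the same support $\{s,t\}$, then $|X'|\le 2$: the case $|X'|=1$ is properly contained in $St$ and is not maximal, and $|X'|=2$ requires $m(s,t)\ge 3$; the relation $d(st,ts)=l((st)^2)=2$ then forces $m(s,t)=3$, giving the third type. Otherwise there are distinct supports. If some $t\in S$ lies in every $S_v$, Lemma~B gives $v=s_vt$ for all $v$, so $X\subseteq St$; by maximality and Example~\ref{exmp1}, $X=St$ (first type). If no element of $S$ lies in every support, then the supports pairwise intersect with empty common intersection; for $2$-subsets this is possible only when they form the triangle $\{\{x,y\},\{x,z\},\{y,z\}\}$, and Lemma~B forbids repeated supports, so $|X'|=3$. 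Applying Lemma~B to each of the three pairs, using $x$, $y$, $z$ as the successive common elements, forces $m(x,y)=m(x,z)=m(y,z)=2$; hence $X=\{1,xy,xz,yz\}$ is the second type.

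The main difficulty is Lemma~B. The verification that each bad configuration yields length~$4$ reduces to a computation in the rank-three parabolic $\langle s_1,s_2,t\rangle$, where the obstruction is that the two copies of $t$ cannot be brought together because the intervening $s_j$ does not commute with $t$. Once Lemma~B is secured, the Helly-type combinatorics on $2$-subsets of $S$ and the maximality arguments for the three cases are straightforward.
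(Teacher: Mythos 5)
Your proof is correct, but it takes a genuinely different route from the paper's. The paper normalizes a maximal $2$-clique $C$ by a right translation so that it contains \emph{two} elements of $S$, and then proves two lemmas (via the exchange condition and Propositions \ref{prop-b1}, \ref{prop-b2}) describing which elements of $W\setminus S$ can be $2$-adjacent to two, respectively three, generators; the classification drops out of a short case analysis on reduced length-$3$ expressions. You instead translate so that $1\in X$, which turns the whole problem into a statement about length-$2$ elements and their two-element supports; your Lemma A (supports pairwise meet) and Lemma B (a common support letter forces both elements to end in it) then reduce the theorem to Helly-type combinatorics of $2$-subsets of $S$, with the three types falling out of the three intersection patterns. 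The trade-off: your route makes the combinatorial skeleton very transparent and isolates all the Coxeter-theoretic content in two length computations, but those computations lean on Tits' solution of the word problem (or the deletion condition), a heavier tool than the exchange condition the paper uses; also, the "direct computation in the rank-three parabolic" in Lemma B is the real content of your argument and is only sketched --- it does go through (each bad word $s_1ts_2t$, $ts_1ts_2$, $ts_1s_2t$ is seen to be reduced by checking all six two-letter deletions, or by noting that its commutation class never contains an adjacent equal pair and admits no long braid move), but it deserves to be written out, since it is exactly where the hypotheses $m(s_j,t)\ge 3$ are used. Two cosmetic points: when $|S|=2$ the set $\{1,st\}$ is not \emph{properly} contained in $St$ but equals it (still of the first type, so the conclusion survives), and in the "common letter $t$" subcase you should say explicitly why no two elements of $X'$ can share a support (apply Lemma B to each of them against a third element with a different support), which is the observation you use later only in the triangle subcase.
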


\begin{rem}{\rm
Every maximal $2$-clique in the Cayley graph of ${\textsf A}_{1}$
is a $2$-clique of the third type. This Cayley graph contains $2$-cliques of the first type,
but they are not maximal.
}\end{rem}

\begin{rem}{\rm
If $(W,S)$ is an irreducible spherical Coxeter system 
($W$ is a finite group and the associated Coxeter diagram is connected) and $|S|=4$
then ${\rm C}(W,S)$ contains $2$-cliques of the second type only in the case when
our Coxeter system is ${\textsf D}_{4}$.
}\end{rem}

\begin{rem}{\rm
If $S$ consists of $n$ mutually commuting involutions
then the Cayley graph is the $n$-dimensional hypercube graph.
It contains maximal $2$-cliques of the first and second types if $n\ge 4$.
In the case when $n=3$, there precisely two maximal $2$-cliques of the second type
and $2$-cliques of the first type are not maximal.
}\end{rem}

\section{Proof of Theorem \ref{theorem1}}

\begin{lemma}\label{lemma1}
If $u\in W\setminus S$ is  $2$-adjacent to three distinct $s,s',s''\in S$ 
then $s,s',s''$ are mutually commuting and $u=ss's''$.
\end{lemma}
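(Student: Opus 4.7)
I would proceed in three steps: pin down $l(u)$, identify the support $S_u$, and then force the commutation relations among $s,s',s''$.

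First, since $u$ is $2$-adjacent to $s$, we have $l(us)=d(u,s)=2$. The standard parity identity $l(ut)=l(u)\pm 1$ for every $t\in S$ (which follows from the exchange condition together with the sign homomorphism $W\to\{\pm 1\}$) forces $l(u)\in\{1,3\}$; since $u\notin S$, $l(u)=3$. Next, for each $t\in\{s,s',s''\}$ the inequality $l(ut)=2<l(u)$ together with the right-hand form of the exchange condition (apply Theorem \ref{th-exch} to $u^{-1}$) produces a reduced expression of $u$ ending in $t$. By Proposition \ref{prop-b1}, any such $t$ lies in $S_u$, so $\{s,s',s''\}\subseteq S_u$; combined with $|S_u|\le l(u)=3$ this gives $S_u=\{s,s',s''\}=:T$, and every reduced expression of $u$ is a permutation of the three letters $s,s',s''$.

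The heart of the argument is showing that $s,s',s''$ pairwise commute. Since $u\in W_T$ satisfies $l(ut)<l(u)$ for every $t\in T$, $u$ is the longest element $w_0$ of the standard parabolic subgroup $W_T$; in particular, $W_T$ must be a finite rank-$3$ Coxeter group with $l(w_0)=3$. The irreducible rank-$3$ possibilities $A_3,B_3,H_3$ give $l(w_0)=6,9,15$ respectively, while the reducible ones $A_1\times I_2(n)$ with $n\ge 2$ give $l(w_0)=n+1$, matching $3$ only when $n=2$, i.e., $W_T=A_1\times A_1\times A_1$. Hence $m(s,s')=m(s,s'')=m(s',s'')=2$, so $s,s',s''$ pairwise commute and $u=ss's''$ regardless of the chosen order.

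The main obstacle is the invocation that $u$ is the longest element of $W_T$: this standard characterisation of $w_0$ (an element having every generator as a right descent exists iff the group is finite, in which case it is unique and equals $w_0$) I would cite from \cite{BB,Davis}. A self-contained alternative is a direct braid-move analysis: by Matsumoto's theorem the reduced expressions of $u$ are connected by braid moves, and in a length-$3$ word with three distinct letters the only move that can fire is an adjacent commutation $s_is_j\mapsto s_js_i$ with $m(s_i,s_j)=2$ (the longer braid $s_is_js_i\mapsto s_js_is_j$ requires a repeated letter and so cannot apply); a short inspection of the orbit of a permutation of three distinct letters under adjacent commutations then shows that reaching endings in each of the three letters forces every pair $m(t_i,t_j)=2$.
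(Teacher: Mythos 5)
Your proof is correct, but it takes a genuinely different route from the paper's. The paper argues entirely by hand: from $d(u,t)=2$ it writes down three reduced length-$3$ expressions of $u$ ending in $s$, $s'$, $s''$ respectively, uses Proposition \ref{prop-b1} to pin down the letter sets, and then grinds through four cases, repeatedly applying the exchange condition (Theorem \ref{th-exch}) to extract each commutation relation; no classification results are used. You reach the same configuration ($l(u)=3$, $S_u=\{s,s',s''\}$, a descent at each of the three letters) and then finish by quoting heavier but standard machinery: either the characterisation of the longest element (every generator a right descent forces $W_T$ finite with $u=w_0$) plus the classification of finite rank-$3$ Coxeter groups, or Matsumoto's theorem plus an inspection of commutation classes of a square-free length-$3$ word. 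Both of your finishes are sound --- the enumeration $A_3,B_3,H_3$ versus $A_1\times I_2(n)$ is complete and the lengths $6,9,15,n+1$ are right, and in the Matsumoto variant the observation that no braid move of length $\ge 3$ can fire in a word with three distinct letters is exactly the right point. What your approach buys is brevity and a conceptual explanation of why commutativity is forced; what the paper's approach buys is self-containedness (only the exchange condition and Propositions \ref{prop-b1}--\ref{prop-b2} are invoked) and, importantly, the by-products recorded in Remark \ref{rem-proof}: the individual implications extracted from cases (2)--(4) are reused verbatim in the proof of Lemma \ref{lemma2}, so the case analysis is doing double duty that your argument would have to replace separately there.
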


\begin{proof}
Since $u$ is $2$-adjacent to $s,s',s''$ and $u\not\in S$, 
there are three reduced expressions
$$u=s_{1}s_{2}s,\;\;u=s'_{1}s'_{2}s',\;\;u=s'_{1}s'_{2}s'',$$
where $s_{1},s_{2},s'_{1},s'_{2},s''_{1},s''_{2}\in S$.
Proposition \ref{prop-b1} guarantees that
$$S_{u}=\{s,s',s''\}$$
and 
$$\{s_{1},s_{2}\}=\{s',s''\},\; \{s'_{1},s'_{2}\}=\{s,s''\},\;
\{s''_{1},s''_{2}\}=\{s,s'\}.$$
Thus we have the following possibilities for the first and second expressions:
\begin{enumerate}
\item[(1)] $u=s''s's=s''ss'$,
\item[(2)] $u=s''s's=ss''s'$,
\item[(3)] $u=s's''s=s''ss'$,
\item[(4)] $u=s's''s=ss''s'$.
\end{enumerate}
Case (1). The involutions $s,s'$ are commuting and the third expression is  
\begin{equation}\label{eq1}
u=ss's''=s'ss''.
\end{equation}
Then $s''s's=u=s'ss''$ and $s's''s's=ss''$.
We apply the exchange condition to $w=s''s's$ and get the following three possibilities:
\begin{enumerate}
\item[$\bullet$] $s's=ss''$,
\item[$\bullet$] $s''s=ss''$,
\item[$\bullet$] $s''s'=ss''$.
\end{enumerate}
The first and the third contradict to Proposition \ref{prop-b2}.
Thus $s$ and $s''$ are commuting.
Similarly, the equality $s''s's=u=ss's''$ shows that $ss''s's=s's''$.
Using the above arguments, we establish that $s'$ and $s''$ are commuting.

Case (2). 
The equality $s''s's=ss''s'$ implies that $s's=s''ss''s'$.
As in the previous case, we show that $s$ and $s'$ are commuting.
Then the third expression is \eqref{eq1}
which implies that $ss''s'=u=ss's''$ and $s',s''$ are commuting.
The equality 
$$s''ss'=s''s's=u=ss''s'$$  
guarantees that $s$ and $s''$ are commuting.
 
Case (3).
We have $s's''s=s''ss'$ and $s''s's''s=ss'$.
As above, this means that $s,s'$ are commuting and
the third expression is \eqref{eq1}.
Then $s's''s=u=s'ss''$ and $s,s''$ are commuting.
The equality 
$$s's''s=u=s''ss'=s''s's$$
shows that $s'$ and $s''$ are commuting.

Case (4).
Since $s's''s=ss''s'$, we have 
$$s''s=s'ss''s'\;\mbox{ and }\;ss's''s=s''s'.$$
By the above arguments, 
this guarantees that $s''$ is commuting with both $s$ and $s'$.
Then 
$$s''ss'=ss''s'=u=s's''s=s''s's$$
which implies that $s$ and $s'$ are commuting.
\end{proof}

\begin{rem}\label{rem-proof}{\rm
Later we will need the following facts established in the proof of Lemma \ref{lemma1}.
If $s,s',s''$ are distinct elements of $S$ then each of the equalities 
$$s''s's=ss''s'\;\mbox{ and }\;s's''s=s''ss'$$
implies that $s$ and $s'$ are commuting, see the cases (2) and (3).
The equality  $$s's''s=ss''s'$$
guarantees that $s,s',s''$  are mutually commuting, see the case (4).
}\end{rem}

Lemma \ref{lemma1} shows that for any three mutually commuting 
$s,s',s''\in S$ the $2$-clique formed by $s,s',s''$ and $ss's''$ is maximal.
Therefore, every $2$-clique of the second type is maximal.

\begin{lemma}\label{lemma2}
If $u\in W\setminus S$ is  $2$-adjacent to $s,s'\in S$ 
then one of the following possibilities is realized:
\begin{enumerate}
\item[$\bullet$] $m(s,s')=3$ and $u=ss's=s'ss'$,
\item[$\bullet$]  $s,s'$ are commuting and $u=s''s's$
for a certain $s''\in S$.
\end{enumerate}
\end{lemma}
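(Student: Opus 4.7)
The plan is to pin down $l(u)$, use it to write two reduced expressions of $u$ ending in $s$ and $s'$, and split on the size of the common support $S_{u}$ furnished by Proposition \ref{prop-b1}. From $d(u,s)=2$ I get $l(us)=2$, hence $l(u)\le 3$. The hypothesis $u\notin S$ excludes $l(u)=1$, and $l(u)=2$ with reduced expression $u=ts$ would force $l(us)=l(t)=1$, contradicting $d(u,s)=2$. So $l(u)=3$ and $u$ admits reduced expressions
\[
u=a_{1}a_{2}s=b_{1}b_{2}s',\qquad a_{i},b_{i}\in S.
\]
By Proposition \ref{prop-b1}, $\{a_{1},a_{2},s\}=\{b_{1},b_{2},s'\}=S_{u}$, so $\{s,s'\}\subseteq S_{u}$ and $|S_{u}|\in\{2,3\}$.

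If $|S_{u}|=2$ then both reduced expressions are alternating words of length $3$ in $\{s,s'\}$, so $u=ss's=s'ss'$. The option $m(s,s')=2$ is excluded because $ss's$ would collapse to length $1$, and $m(s,s')\ge 4$ is excluded because then $ss's\ne s'ss'$. Hence $m(s,s')=3$, which is the first conclusion of the lemma.

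If $|S_{u}|=3$, write $S_{u}=\{s,s',s''\}$. Then $\{a_{1},a_{2}\}=\{s',s''\}$ and $\{b_{1},b_{2}\}=\{s,s''\}$, so matching the two reduced expressions for $u$ produces exactly the four combinations
\[
s''s's=s''ss',\quad s''s's=ss''s',\quad s's''s=s''ss',\quad s's''s=ss''s'.
\]
In each of them $s$ and $s'$ commute: Remark \ref{rem-proof} covers the second, third, and fourth combinations, and a one-line cancellation of $s''$ on the left dispatches the first. Using $ss'=s's$ (together with the full commutativity of $s,s',s''$ granted by Remark \ref{rem-proof} in the fourth combination), each of the four $u$-expressions rewrites as $s''s's$, which is the second conclusion.

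The delicate step is the length computation: ruling out $l(u)=2$ is what secures the two length-$3$ reduced expressions that drive everything else, after which the proof is driven entirely by Proposition \ref{prop-b1} combined with Remark \ref{rem-proof}, both recorded precisely for this use.
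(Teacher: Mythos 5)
Your proof is correct and follows essentially the same route as the paper's: two reduced expressions of length three ending in $s$ and $s'$, Proposition \ref{prop-b1} to identify the common support $S_{u}$, and the four combinations in the case $|S_{u}|=3$ resolved via Remark \ref{rem-proof}. The only added content is your explicit justification that $l(u)=3$ (left implicit in the paper); note that the cleanest way to rule out $l(u)=2$ is that $l(us)$ and $l(u)$ have opposite parities, since a priori a length-two element need not admit a reduced expression ending in $s$.
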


\begin{proof}
Since $u$ is $2$-adjacent to $s,s'$ and $u\not\in S$, 
there are two reduced expressions
$$u=s_{1}s_{2}s\;\mbox{ and }\;u=s'_{1}s'_{2}s',$$
where $s_{1},s_{2},s'_{1},s'_{2}\in S$.
By Proposition \ref{prop-b1}, 
$$\{s,s_{1},s_{2}\}=S_{u}=\{s',s'_{1},s'_{2}\}.$$
If $|S_{u}|=2$ then $S_{u}=\{s,s'\}$ and 
$$u=ss's=s'ss'$$
which implies that $m(s,s')=3$, i.e. the first possibility is realized.

If $|S_{u}|=3$ then $S_{u}=\{s,s',s''\}$ 
and, as in the proof of Lemma \ref{lemma1},
we have the following possibilities for the above expressions:
\begin{enumerate}
\item[(1)] $u=s''s's=s''ss'$,
\item[(2)] $u=s''s's=ss''s'$,
\item[(3)] $u=s's''s=s''ss'$,
\item[(4)] $u=s's''s=ss''s'$.
\end{enumerate}
Each of these equalities gives the second possibility. 
Indeed, the case (1) is trivial
and for the cases (2) -- (4) this follows from Remark \ref{rem-proof}.
\end{proof}

By Lemma \ref{lemma2}, 
for any $s,s'\in S$ satisfying $m(s,s')=3$
the $2$-clique formed by $s,s'$ and $ss's=s'ss'$ is maximal.
Thus every $2$-clique of the third type is maximal. 

Now we prove Theorem \ref{theorem1}.
Let $C$ be a maximal $2$-clique of the Cayley graph. 
For any $u,u'\in C$ there exist $w\in W$ and $s,s'\in S$ such that 
$u=sw$ and $u'=s'w$.
The maximal $2$-clique $Cw^{-1}$ contains $s$ and $s'$.
Thus we can suppose that $C$ contains at least two distinct elements of $S$.

So, let $s$ and $s'$ be elements of $S$ belonging to $C$.
Suppose that $C\ne S$, i.e. there is $u\in C\setminus S$.
We have the following possibilities:
\begin{enumerate}
\item[$\bullet$] there is a third element $s''\in S$ contained in $C$,
\item[$\bullet$] $C$ contains precisely two elements of $S$.
\end{enumerate}
In the first case, Lemma \ref{lemma1} implies that
$s,s',s''$ are mutually commuting and 
$$C=\{s,s',s'', u=ss's''\}.$$
In the second case, Lemma \ref{lemma2} gives the following possibilities: 
$$m(s,s')=3\;\mbox{ and }\;C=\{s,s',ss's=s'ss'\}$$
or $s,s'$ are commuting  and $u=s''s's$
for a certain $s''\in S$. 
The latter means that the maximal $2$-clique $Cs's$ 
contains $s,s',s''$, i.e. 
$$Cs's=S\;\mbox{ or }\; Cs's=\{s,s',s'',u\}.$$
Then $C$ is a $2$-clique of the first type or the second type.

\section{The half of Cayley graph}
The group $W$ can be presented as the disjoint union of the following subsets
$$W_{1}:=\{\;w\in W:l(w)\mbox{ is odd }\}\;\mbox{ and }\;
W_{2}:=\{\;w\in W:l(w)\mbox{ is even }\}.$$
Using the exchange condition we establish the following:
\begin{enumerate}
\item[$\bullet$] the distance between any two elements of $W_{i}$, $i\in \{1,2\}$ is even,
\item[$\bullet$] the distance between every element of $W_{1}$ and every element of $W_{2}$ is odd,
\end{enumerate}
Also, note that $W_{2}$ is a subgroup of $W$.

Consider the graph $\Gamma_{i}$, $i\in \{1,2\}$ whose vertex set is $W_{i}$
and two elements of $W_{i}$ are adjacent vertices 
if the distance between them (in the Cayley graph) is equal to $2$.
The right multiplication $R_{w}$ preserves both $W_{i}$ if $w\in W_{2}$.
If $w\in W_{1}$ then $R_{w}$ transfers $W_{1}$ to $W_{2}$ and conversely,
i.e. $R_{w}$ induces an isomorphism between $\Gamma_{1}$ and $\Gamma_{2}$. 

\begin{rem}{\rm
In the case when $S$ consists of mutually commuting involutions,
we get the well-known {\it half-cube graph}.
}\end{rem}

Every $2$-clique of the Cayley graph is contained in $W_{1}$ or $W_{2}$
and every maximal clique of $\Gamma_{i}$, $i\in \{1,2\}$ is a maximal $2$-clique of ${\rm C}(W,S)$
contained in $W_{i}$.

\begin{cor}\label{cor1}
If $|S|\ge 5$ then every isomorphism between $\Gamma_{i}$ and $\Gamma_{j}$ $i,j\in \{1,2\}$  
can be uniquely extended to an automorphism of the Cayley graph.
\end{cor}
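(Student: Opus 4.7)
The plan is to exploit Theorem~\ref{theorem1}: the maximal cliques of $\Gamma_{i}$ are exactly the maximal $2$-cliques of ${\rm C}(W,S)$ contained in $W_{i}$, and by Theorem~\ref{theorem1} they come in three cardinalities, namely $|S|$ for the first type, $4$ for the second, and $3$ for the third. The hypothesis $|S|\ge 5$ therefore makes first-type cliques the unique maximal cliques of largest size, so any graph isomorphism $f:\Gamma_{i}\to\Gamma_{j}$ must carry first-type cliques in $W_{i}$ to first-type cliques in $W_{j}$. Writing $i'$ for the element of $\{1,2\}\setminus\{i\}$ and similarly $j'$, the map $w\mapsto Sw$ is a bijection from $W_{i'}$ onto the set of first-type maximal cliques of $\Gamma_{i}$: bijectivity uses the observation $Sw\subset W_{i}\Longleftrightarrow w\in W_{i'}$ together with Remark~\ref{rem1}, which applies because $|S|\ge 5$ excludes $\textsf{I}_{2}(2)$. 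Consequently there is a unique bijection $g:W_{i'}\to W_{j'}$ characterized by $f(Sw)=Sg(w)$ for every $w\in W_{i'}$.

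Next I would define $F:W\to W$ by glueing $F|_{W_{i}}=f$ and $F|_{W_{i'}}=g$, and verify it is an automorphism of the Cayley graph. An edge of ${\rm C}(W,S)$ takes the form $\{u,v\}$ with $v\in W_{i'}$ and $u\in Sv\subset W_{i}$, so
$$F(u)=f(u)\in f(Sv)=Sg(v)=SF(v),$$
showing that $F(u)$ and $F(v)$ are adjacent. Non-adjacent pairs across the two halves correspond to $u\notin Sv$, and these are preserved because $f$ restricts to a bijection $Sv\to SF(v)$; pairs inside a single half are non-adjacent automatically, and $F$ preserves the halves setwise. Hence $F$ is an automorphism.

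For uniqueness, any extension $\widetilde F$ of $f$ to an automorphism of ${\rm C}(W,S)$ must permute the first-type maximal cliques among themselves (again by the size argument), so for $w\in W_{i'}$ one has $\widetilde F(Sw)=f(Sw)=Sg(w)$; the injectivity of $w\mapsto Sw$ then forces $\widetilde F(w)=g(w)=F(w)$.

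The main obstacle I anticipate is not conceptual but rather the careful bookkeeping of two ingredients, namely the cardinality separation among the three types of maximal $2$-cliques and the injectivity of $w\mapsto Sw$. Both are precisely what the hypothesis $|S|\ge 5$ supplies: the cardinality separation requires $|S|>4$, and the injectivity requires $|S|\ne 2$ via Remark~\ref{rem1}. Once these two points are pinned down, the extension essentially builds itself.
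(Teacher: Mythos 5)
Your proposal is correct and follows essentially the same route as the paper: classify the maximal cliques of $\Gamma_{i}$ via Theorem~\ref{theorem1}, use $|S|\ge 5$ to separate the first type by cardinality, define the extension on the other half through $f(Sw)=Sg(w)$, and derive uniqueness from the injectivity of $w\mapsto Sw$ (Remark~\ref{rem1}). The only differences are cosmetic — you treat the general pair $(i,j)$ at once and spell out the bijectivity of $w\mapsto Sw$ slightly more explicitly.
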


\begin{proof}
We consider the case when $i=j=1$.
Let $f:W_{1}\to W_{1}$ be an automorphism of $\Gamma_{1}$. 
Then $f$ preserves the family of maximal cliques of $\Gamma_{1}$.
Every maximal clique of $\Gamma_{1}$ is a maximal $2$-clique of ${\rm C}(W,S)$
contained in $W_{1}$. 
By Theorem \ref{theorem1}, there are precisely three types of such subsets.
They contain $|S|$ vertices, $4$ vertices and $3$ vertices, respectively.
The condition $|S|\ge 5$ guarantees that $f$ preserves the types of maximal cliques.

If $w\in W_{2}$ then $Sw$ is a maximal clique of $\Gamma_{1}$
and $f(Sw)=Sw'$ for a certain $w'\in W_{2}$.
We set $f(w):=w'$ and get a bijective transformation of $W$.

If $w,v\in W$ are adjacent vertices of the Cayley graph
then one of these vertices belongs to $W_{1}$ and the other is an element of $W_{2}$.
Suppose that $v\in W_{1}$ and $w\in W_{2}$. 
Then $v\in Sw$ and $f(v)\in f(Sw)=Sf(w)$ which implies that $f(v)$ and $f(w)$
are adjacent vertices of the Cayley graph.
The apply the same arguments to $f^{-1}$ and establish that $f$ is an automorphism of the Cayley graph.

The uniqueness of such extension follows from the fact that  
$w$ is the unique vertex of the Cayley graph adjacent to all vertices from $Sw$ (Remark \ref{rem1}). 
\end{proof}

\begin{rem}{\rm
In the case when $|S|=4$, the latter statement fails.
If $S$ consists of $4$ mutually commuting involutions then 
the Cayley graph is the $4$-dimensional hypercube graph.
There are automorphisms of the associated half-cube graph which transfer 
$2$-cliques of the first type to $2$-cliques of the second type
and conversely. 
They are not extendable to automorphisms of the hypercube. 
}\end{rem}

\begin{cor}\label{cor2}
Suppose that $|S|\ge 5$.
Let $f$  be a bijective transformation of $W$ preserving  the distance $2$ in both directions, i.e.
$$d(w,v)=2\;\Longleftrightarrow\;d(f(w),f(v))=2$$
for all $w,v\in W$.
Then there are two automorphisms $f_{1}$ and $f_{2}$ of the Cayley graph
such that the restriction of $f$ to $W_{i}$, $i\in \{1,2\}$ coincides with 
the restriction of $f_{i}$ to $W_{i}$
\footnote{The transformation $f$ is an automorphism of the Cayley graph if and only if $f_{1}=f_{2}$.}.
\end{cor}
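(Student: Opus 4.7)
The plan is to reduce everything to Corollary \ref{cor1} by first showing that $f$ respects the partition $W = W_1 \sqcup W_2$, possibly up to a swap of the two parts. Once this is in hand, each restriction $f|_{W_i}$ is a graph isomorphism $\Gamma_i \to \Gamma_{\sigma(i)}$ for some permutation $\sigma$ of $\{1,2\}$, and Corollary \ref{cor1} produces the automorphisms $f_1, f_2$ of ${\rm C}(W,S)$ extending these restrictions.

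The structural claim I would use is that the graph $\Delta$ on vertex set $W$ whose edges are the Cayley-distance-$2$ pairs equals the disjoint union $\Gamma_1 \sqcup \Gamma_2$, with $\Gamma_1, \Gamma_2$ being exactly its two connected components. That no edge of $\Delta$ crosses between $W_1$ and $W_2$ is immediate from the parity homomorphism $l \bmod 2 : W \to \mathbb{Z}/2$, which gives $l(wv^{-1}) \equiv l(w) + l(v) \pmod 2$. For connectedness of each $\Gamma_i$, given $w, v \in W_i$, fix a reduced expression $wv^{-1} = s_1 s_2 \cdots s_{2k}$ and build the Cayley-graph path $v = v_0, v_1, \ldots, v_{2k} = w$ with $v_{j+1} = s_{2k-j}\, v_j$. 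The even-indexed subsequence lies in $W_i$ (again by the parity observation), and consecutive terms $v_{2j}, v_{2j+2}$ differ by left multiplication by $s_{2k-2j-1} s_{2k-2j}$, so sit at Cayley-distance $0$ or $2$; suppressing repeats gives a $\Gamma_i$-path.

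Since $f$ is a bijection of $W$ preserving the distance-$2$ relation in both directions, it is an automorphism of $\Delta$ and hence permutes the components $W_1, W_2$. In the case $f(W_i) = W_i$ for $i = 1, 2$, each $f|_{W_i}$ is an automorphism of $\Gamma_i$; in the case $f$ swaps $W_1$ and $W_2$, each $f|_{W_i}$ is an isomorphism $\Gamma_i \to \Gamma_{3-i}$. Either way, Corollary \ref{cor1} produces a unique automorphism $f_i$ of ${\rm C}(W,S)$ with $f_i|_{W_i} = f|_{W_i}$. The footnoted equivalence is then immediate: if $f$ itself is a Cayley-graph automorphism, the uniqueness in Corollary \ref{cor1} forces $f_1 = f = f_2$, and conversely $f_1 = f_2$ obviously implies that $f$ agrees with this common automorphism on all of $W$.

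The main obstacle is the connectedness of $\Gamma_i$ inside $\Delta$; after that, the argument is a clean two-case application of Corollary \ref{cor1}, and both the existence and uniqueness parts of the statement come for free from the corresponding clauses of that corollary.
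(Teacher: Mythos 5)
Your proof is correct and follows the same route as the paper: show that $f$ either preserves each $W_i$ or swaps them, then invoke Corollary \ref{cor1}. The paper dismisses the first step with ``it is not difficult to show,'' whereas you supply the natural justification (the distance-$2$ graph has exactly $W_1$ and $W_2$ as its connected components, using parity of length and reduced expressions), so your write-up is a faithful, fully detailed version of the intended argument.
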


\begin{proof}
It is not difficult to show that $f$ preserves both $W_{i}$ or 
transfers $W_{1}$ to $W_{2}$ and conversely.
Corollary \ref{cor1} gives the claim.
\end{proof}

\subsection*{Acknowledgment}
The author thanks John D. Dixon for the interesting and remarks.

\end{document}